\documentclass[a4paper,12pt]{article}
\usepackage[margin=1in]{geometry}
\usepackage{setspace}
\usepackage[english]{babel}
\usepackage[utf8]{inputenc}
\usepackage{amsmath,amssymb,amsfonts}
\usepackage{amsthm}
\usepackage{hyperref}
\newtheorem{theorem}{Theorem}[section]
\newtheorem{definition}{Definition}[section]
\newtheorem{example}{Example}[section]

\newtheorem{proposition}{Proposition}[section]

\title{A Complementarity Property of Rank-One Multiplicative Maps}
\date{}
\author{K.C. Sivakumar \\
Department of Mathematics\\
Indian Institute of Technology Madras\\
Chennai, 600036, India} 

\begin{document}
\maketitle

\begin{abstract}
The semidefinite linear complementarity problem associated with the multiplicative map $M_A:\mathbb{S}^n\to\mathbb{S}^n$ defined by $M_A(X)=AXA^T$, is considered, in the case when $A\in\mathbb{R}^{n\times n}$ has rank one. A linear map is said to have the $Q_0$-property if every feasible instance of the associated complementarity problem admits a complementary solution.  We prove two sharp, complementary results. When $A=xy^T$ with $x,y\in\mathbb{R}^n$ \emph{linearly independent}, we construct a matrix $Q\in\mathbb{S}^n$ for which the associated semidefinite complementarity problem is feasible, but possesses no complementary solution; proving that $M_A$ \emph{does not have} the $Q_0$-property.  Conversely, when $A=uu^T$ for some nonzero $u\in\mathbb{R}^n$, we prove that $M_A$ \emph{has} the $Q_0$-property. These two results provide a complete characterization of the $Q_0$-property within the family of rank-one multiplicative maps.
\end{abstract}

\vskip.25in
\textit{Keywords:} $Q_0$-property, multiplicative map, semidefinite linear complementarity problem.\\

\textit{AMS Subject classification:} 90C33.

\newpage

\section{Introduction}
We study the semidefinite linear complementarity problem associated with the multiplicative map
\(M_A : \mathbb{S}^n \to \mathbb{S}^n\) defined by \(M_A(X) = AXA^T\), in the case where \(A \in \mathbb{R}^{n \times n}\) is a rank-one matrix. A linear map is said to have the \(Q_0\)-property, if every feasible instance of its associated semidefinite complementarity problem admits a complementary solution. These notions will be made precise in the next section. In this setting, we establish two sharp and complementary results. First, when \(A = xy^T\) with linearly independent vectors \(x,y \in \mathbb{R}^n\), we construct a matrix \(Q \in \mathbb{S}^n\) such that the corresponding semidefinite
complementarity problem is feasible,  but has no complementary solution, showing that \(M_A\) fails to have the \(Q_0\)-property. On the other hand, when \(A = uu^T\) for some nonzero \(u \in \mathbb{R}^n\), we prove that \(M_A\) does possess the \(Q_0\)-property. Taken together, these results yield a complete characterization of the \(Q_0\)-property within the class of rank-one multiplicative maps. 

\section{Framework, Notation  and Preliminaries}
\subsection{The linear complementarity problem}
Let $\mathbb{R}^{n\times n}$ denote the set of all square matrices of order $n,$ with real entries, while $\mathbb{R}^n$ is the $n$-dimensional real Euclidean space. For $M\in\mathbb{R}^{n\times n}$ and $q\in\mathbb{R}^n$, the {\it linear complementarity problem} (LCP$(M,q)$) is to determine if there is a vector
$x\in\mathbb{R}^n$ satisfying
\begin{center}
$x\geq 0,\quad Mx+q\geq 0~~$ and $~~x^T(Mx+q)=0,$    
\end{center}
or to show that such a vector does not exist. Here, $x \geq 0$ signifies that all the coordinates of $x$ are nonnegative. A vector $x$ satisfying the first two linear inequalities above is called a {\it feasible} vector. If a feasible vector also satisfies the complementarity condition, then it is called a {\it solution}. The theory of the LCP, including a rich taxonomy of matrix classes characterized by their complementarity behaviour, is thoroughly developed in the monograph \cite{CPS1992}. We also refer the reader to the more recent article \cite{cott} presenting an excellent summary.

Among the most studied matrix classes in LCP theory are the $Q$-matrices and the $Q_0$-matrices.  A matrix $M$ is called a \emph{$Q$-matrix} (or $M$ has the $Q$-property) if LCP$(M,q)$ has a solution for every $q\in\mathbb{R}^n$, and a \emph{$Q_0$-matrix} (or $M$ has the $Q_0$-property) if every \emph{feasible} instance of LCP$(M,q)$ has a {\it solution}, without demanding that all instances be solvable. We informally refer to this as ``feasibility implies solvability." Clearly, every $Q$-matrix is a $Q_0$-matrix, but the converse fails. In view of this, when we address the problem of determining if a matrix is a $Q_0$-matrix, then we tacitly assume that it is not a $Q$-matrix. It is pertinent to observe that LCP$(M,q)$ always has a solution (viz., $x=0$) when $q \geq 0.$ Thus, when we want to verify if a matrix has the $Q_0$-property, then we may assume that the vector $q$ is not nonnegative. 

Let us recall the following results for (entrywise) nonnegative matrices, in the context of the LCP. This provides the main  motivation for our study. 

\begin{enumerate}
    \item \textbf{$Q$-Property for nonnegative matrices:} A nonnegative matrix is a $Q$-matrix if and only if all its diagonal entries are positive \cite[Theorem 5.2]{mur}.
    \item \textbf{$Q$-Property for rank-one matrices:} A rank-one matrix is a $Q$-matrix if and only if all its entries are positive \cite[Theorem 3.4]{kcs}.
    \item \textbf{$Q_0$-Property for nonnegative matrices:} A nonnegative matrix is a $Q_0$-matrix if and only if any row containing a zero diagonal entry has all its entries zero \cite[Theorem 2.5]{survey}. 
\end{enumerate}

Interestingly, the question of characterizing \textbf{$Q_0$}-property for rank-one matrices does not seem to have been investigated.

\subsection{The semidefinite linear complementarity problem}

We write $\mathbb{S}^n$ for the space of real symmetric $n\times n$ matrices, equipped with the trace inner product $\langle X,Y\rangle:=\operatorname{tr}(XY)$.  We write $X\succeq 0$ (resp.\ $X\succ 0$) to mean that $X$ is symmetric and positive semidefinite (resp.\ positive definite), and $\mathbb{S}^n_+$ for the cone of positive semidefinite matrices.  For a matrix $W$, $\mathcal{R}(W)$ and $\mathcal{N}(W)$ denote its range space and null space, respectively. The notation $u^\perp$ stands for the orthogonal complement of the
singleton $\{u\}$ in $\mathbb{R}^n$ and ``$\rm sp$" denotes the span. Finally, $A^T$ denotes the transpose of the matrix $A$.

Next, we define the semidefinite linear complementarity problem (SDLCP). This problem may be thought of a matrix-valued generalization of the LCP, in which the nonnegative orthant of
$\mathbb{R}^n$ is replaced by the cone $\mathbb{S}^n_+$.

\begin{definition}
Let $\mathcal{L}:\mathbb{S}^n\to\mathbb{S}^n$ be a linear map and $Q\in\mathbb{S}^n$.  The \emph{semidefinite linear complementarity problem} $\mathrm{SDLCP}(\mathcal{L},Q)$ is to find matrices $X,W\in\mathbb{S}^n$
such that
\[
  X\succeq 0,\qquad W = \mathcal{L}(X)+Q\succeq 0,\quad \textit{and} \quad \langle X,W\rangle = 0,
\]
\end{definition}
or to show that such matrices do not exist. 
Again, the first two conditions are linear inequality requirements, while the third condition is a {\it complementarity} condition. We say that $\mathrm{SDLCP}(\mathcal{L},Q)$ is \emph{feasible} if there
exists $X\succeq 0$ with $\mathcal{L}(X)+Q\succeq 0$; such a matrix $X$ also being called {\it feasible}. The problem $\mathrm{SDLCP}(\mathcal{L},Q)$ is \emph{solvable} if there
exists a feasible $X$ which also satisfies the complementarity condition. 

\begin{definition}
The linear map
$\mathcal{L}$ is said to have the \emph{$Q$-property} if for every $Q\in\mathbb{S}^n$, the problem $\mathrm{SDLCP}(\mathcal{L},Q)$ has a solution. The linear map $\mathcal{L}$ is said to have the \emph{$Q_0$-property} if for every
$Q\in\mathbb{S}^n$, feasibility of $\mathrm{SDLCP}(\mathcal{L},Q)$
implies that it is solvable.    
\end{definition}

We make the following observations:

\begin{enumerate}
    \item The complementarity condition $\langle X,W\rangle=0$ combined with $X\succeq 0$ and $W\succeq 0$ is equivalent to $XW=0$ (equivalently $WX=0$), since for positive semidefinite matrices $\operatorname{tr}(XW)=0\iff XW=0$. This is further equivalent to ${\cal R}(W) \subseteq {\cal N}(X).$
\item As is the case for the LCP, when we refer to a linear map ${\cal L}$ as having the $Q_0$-property, we assume that it does not have the $Q$-property. 
\item Again, as is the case for the LCP, when we are verifying if ${\cal L}$ has the $Q_0$-property, we may assume that $Q \nsucceq 0,$ whenever $\mathrm{SDLCP}(\mathcal{L},Q)$ is feasible.
\end{enumerate}

Here is a brief survey. Let us recall that a  linear map $\mathcal{L}$ is said to have the \emph{$P$-property} if $$X{\mathcal L}(X)={\mathcal L}(X)X \preceq 0 \Longrightarrow X=0.$$
This definition was proposed by Gowda and Song \cite{GS2000} motivated the sign non-reversal property of $P$-matrices. They carried out a detailed study of the SDLCP analogous to the classical LCP theory. The concepts of $P$-matrix, $Q$-matrix and the globally uniquely solvability (GUS) property for the ordinary linear complementarity problem were generalized and existence results were obtained. The classical Lyapunov theorem was presented as a special case of an equivalence theorem that shows that the $P$-property and $Q$-property  of the Lyapunov transformation $L_A$ defined by $L_A(X):=AX+XA^T,$ are equivalent to the condition that all real parts of the eigenvalues of $A$ are positive. Gowda and Parthasarathy \cite{GP2000} subsequently established complementarity forms of the theorems of Lyapunov and Stein, further illuminating the structure of the SDLCP. In particular, they showed that the $P$-property of the Stein transformation $S_A,$ defined on the space of complex hermitian matrices, by $S_A(X):=X-AXA^*$, is equivalent to the condition that all the eigenvalues of $A$ lie in the open unit disc. 

Another linear map, called the multiplicative map, has been investigated quite extensively in the SDLCP literature, concerning its $Q$-property. In this article, we shall be interested in the $Q_0$-property of the multiplicative map, which we recall, next. 

\subsection{The multiplicative map}
The \emph{multiplicative map} $M_A:\mathbb{S}^n\to\mathbb{S}^n,$ associated with $A\in\mathbb{R}^{n\times n}$ is defined by
\[
  M_A(X)=AXA^T.
\]
When $A$ has rank one, say $A=xy^T$, then $M_A(X)$ is a rank one matrix, for every $X$:
\[
  M_A(X) = (y^TXy)\,xx^T.
\]
Conversely, if $M_A$ is a rank-one operator, then $M_A(I)=AA^T$ is of rank one and so $A$ is of rank one.

\begin{definition}
A linear transformation $L:\mathbb{S}^n \to \mathbb{S}^n$ is a \emph{positive operator} with respect to the positive semidefinite cone $\mathbb{S}^n_+$ if $L(\mathbb{S}^n_+) \subseteq \mathbb{S}^n_+$, meaning:
\begin{equation}
X \succeq 0 \implies L(X) \succeq 0.
\end{equation}
\end{definition}

For any vector $v \in \mathbb{R}^n$, and $X \in \mathbb{S}^n_+$:
$$v^T M_A(X) v = v^T (AXA^T) v = (A^T v)^T X (A^T v) \geq 0.$$ Thus, 
$M_A$ is a positive operator. Next, let $M_A$ possess the $Q$-property. Let $A^Tx=0$ and set $Q:=-xx^T$. By the $Q$-property of $M_A$, there exists $X \succeq 0$ such that $AXA^T-xx^T=M_A(X) +Q\succeq 0,$ i.e. $$0 \leq x^TAXA^Tx-{\parallel x \parallel}^4=-{\parallel x \parallel}^4,$$ so that $x=0.$ Here, $\parallel . \parallel$ refers to the Euclidean norm on $\mathbb{R}^n.$ We summarize these, in the next result.

\begin{proposition}\label{prop:MA_pos}
Let $A \in \mathbb{R}^{n \times n}$. We have: 
\begin{enumerate}
    \item $M_A$ is a positive operator on $\mathbb{S}^n_+$.
\item If $M_A$ has the $Q$-property, then $A$ is nonsingular.
\end{enumerate}
\end{proposition}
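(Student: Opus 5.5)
The plan is to treat the two assertions of the proposition separately, since each follows from a short direct computation, essentially the one given just before the statement.

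For part 1, I will fix $X\succeq 0$ and note first that $M_A(X)=AXA^T$ is symmetric, because $(AXA^T)^T=AX^TA^T=AXA^T$. Then for an arbitrary $v\in\mathbb{R}^n$ I will set $w:=A^Tv$ and compute
\[
v^TM_A(X)v=(A^Tv)^TX(A^Tv)=w^TXw\ge 0,
\]
where the inequality is positive semidefiniteness of $X$. Since $v$ is arbitrary, this gives $M_A(X)\succeq 0$, so $M_A(\mathbb{S}^n_+)\subseteq\mathbb{S}^n_+$ and $M_A$ is a positive operator in the sense of the definition above.

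For part 2, I will argue by showing that $\mathcal{N}(A^T)=\{0\}$, which is equivalent to nonsingularity of $A$. Take $x$ with $A^Tx=0$ and choose the instance $Q:=-xx^T\in\mathbb{S}^n$. By the $Q$-property there is some $X\succeq 0$ with $W:=AXA^T-xx^T\succeq 0$; only feasibility of this $X$ is needed, not complementarity. Testing $W$ against the vector $x$ gives
\[
0\le x^TWx=(A^Tx)^TX(A^Tx)-(x^Tx)^2=-\|x\|^4,
\]
because $A^Tx=0$. Hence $x=0$, so $A^T$ is injective, and therefore $A$ is nonsingular.

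Neither part has a genuine obstacle. The one point that needs care is the choice of $Q$ in part 2: we want $Q$ to be negative in a direction $x$ that $M_A(X)$ can never compensate for, and the natural candidate is a vector in $\mathcal{N}(A^T)$, since every matrix $AXA^T$ annihilates such a direction in the quadratic form.
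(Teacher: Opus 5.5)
Your proposal is correct and follows the paper's argument essentially verbatim: you use the quadratic-form computation $v^TAXA^Tv=(A^Tv)^TX(A^Tv)\ge 0$ for part 1, and for part 2 the instance $Q=-xx^T$ with $A^Tx=0$, tested against $x$ to force $-\|x\|^4\ge 0$. Your explicit check that $AXA^T$ is symmetric, and your remark that only feasibility (not complementarity) is used, are small additions the paper leaves implicit.
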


Let $A$ be a rank-one matrix. From the second item of Proposition \ref{prop:MA_pos} in particular, it follows that $M_A$ does not have the $Q$-property. Thus, it is pertinent to ask if it has the $Q_0$-property. In view of the first item, this problem may be thought of as a semidefinite version of the results, known for nonnegative matrices, in the standard LCP. As mentioned earlier, while we are not aware of any result that characterizes when a nonnegative matrix has the $Q_0$-property (concerning the standard LCP), we give a complete answer to this question in the SDLCP setting, for the operator $M_A$.

Here is a quick summary of the results of this article. 

\begin{itemize}
  \item \textbf{Theorem~\ref{thm:notQ0}}  shows
    that if $A=xy^T$ with $x,y$ linearly \emph{independent}, then $M_A$
    does \emph{not} have the $Q_0$-property. The adopted proof method gives rise to two cases, and for each possibility, the proof constructs an explicit matrix $Q$ for which the SDLCP is feasible, but admits no     complementary solution.
  \item \textbf{Theorem~\ref{symmetric}} shows that
    if $A=uu^T$, so that $A$ is symmetric positive semidefinite of     rank one, then $M_A$ \emph{has} the $Q_0$-property.  
    Here, for any matrix $Q$ for which the SDLCP is feasible, 
    we explicitly construct a complementary solution. Interestingly, the solution is explicit and involves the Moore-Penrose inverse of $Q$.
\end{itemize}

Thus, within the class of rank-one multiplicative maps, the $Q_0$-property
holds precisely when $A$ is a symmetric positive semidefinite rank-one matrix, i.e.\ when $A=uu^T$ for some nonzero $u$.

Let us give a short survey of the literature on the map $M_A.$ It was shown in \cite{bhim} that $M_A$ has the GUS property if and only if $A$ is either positive definite or negative definite. It was proved in \cite{tp} that, if $A$ is symmetric, then $M_A$ has the $Q$-property if and only if $A$ is either positive definite or negative definite. Generalizations of some of these results for Euclidean Jordan Algebras were obtained in \cite{gowszn}.

We conclude this section recalling the notion of the Moore-Penrose inverse. For $A \in \mathbb{R}^{n \times n}$, there is a unique $X\in \mathbb{R}^{n \times n}$ satisfying the matrix equations: $AXA=A,~XAX=X,~(AX)^T=AX$ and $(XA)^T=XA.$ This unique $X$ is denoted by $A^{\dag}$ and is called the Moore-Penrose inverse of $A$. Note that, if $D=\rm diag(d_1, d_2, \ldots, d_n),$ the diagonal matrix with diagonal entries $d_1, d_2, \ldots, d_n,$ then $D^{\dag}=\rm diag(d_1^{\dag}, d_2^{\dag}, \ldots, d_n^{\dag}),$ where for any number $\lambda,$ we have ${\lambda}^{\dag}=\frac{1}{\lambda},$ if $\lambda \neq 0$, while  ${\lambda}^{\dag}=0,$ if $\lambda=0.$ Finally, if $A$ is symmetric with the eigen-decomposition $A=UDU^T$, for an orthogonal matrix $U$ and $D$ a diagonal matrix, then $A^{\dag}=UD^{\dag}U^T.$ For more details, we refer to \cite{bengre}.

Finally, if $A$ is a real symmetric matrix, then ${\cal R}(A)$ and ${\cal N}(A)$ are orthogonal complementary subspaces of $\mathbb{R}^n.$ We shall be making an implicit use of this property.

\section{$M_A$ is not a $Q_0$-operator, when $A$ is rank one nonsymmetric.}
In this section, we show that when $A$ is a non-symmetric rank one matrix, $M_A$ does not possess the $Q_0$-property. 

\begin{theorem}\label{thm:notQ0}
For linearly independent vectors $x,y\in\mathbb{R}^n$, let $A = xy^T$. Then $M_A$ does not have the $Q_0$-property.
\end{theorem}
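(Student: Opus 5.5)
The plan is to reduce the SDLCP to a one-parameter problem and then build a single explicit $Q$, with no case split. For $A=xy^T$ we have $M_A(X)=(y^TXy)\,xx^T$. So for any feasible $X$, the matrix $W=Q+t\,xx^T$ depends on $X$ only through the scalar $t=y^TXy\ge 0$. Every $t\ge 0$ is attained, for instance by $X=t\,yy^T/\|y\|^4$. Hence $\mathrm{SDLCP}(M_A,Q)$ is feasible if and only if $Q+t\,xx^T\succeq 0$ for some $t\ge 0$.

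Using observation 1 of Section 2, a complementary solution with parameter $t$ is some $X\succeq 0$ with $\mathcal{R}(X)\subseteq\mathcal{N}(W)$ and $y^TXy=t$.
\begin{itemize}
\item If $t=0$, this forces $Q\succeq 0$.
\item If $t>0$, such an $X$ exists if and only if the orthogonal projection of $y$ onto $\mathcal{N}(W)$ is nonzero. If it is nonzero, take $X=s\,pp^T$ with $p$ that projection.
\end{itemize}
So it suffices to find $Q\not\succeq 0$ such that every $t>0$ with $Q+t\,xx^T\succeq 0$ has $y\perp \mathcal{N}(Q+t\,xx^T)$.

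The construction takes $Q=B-c\,xx^T$ with $B\succ 0$ and $c>0$. Put $t_0:=c-1/(x^TB^{-1}x)$. A standard Schur-complement/rank-one computation shows $B-(c-t)xx^T\succeq 0$ if and only if $(c-t)\,x^TB^{-1}x\le 1$, that is, if and only if $t\ge t_0$. Choosing $c>1/(x^TB^{-1}x)$ gives $t_0>0$, so $Q\not\succeq 0$ while the problem is feasible.

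For $t>t_0$ the matrix $W$ is positive definite. Then $X=0$, so $t=0$, which is a contradiction. At $t=t_0$, $W$ is a PSD rank-one perturbation of $B$ with nullity exactly one, and $\mathcal{N}(W)=\mathrm{sp}\{B^{-1}x\}$. Everything therefore reduces to choosing $B\succ 0$ with $y^TB^{-1}x=0$.

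This choice of $B$ is the one step where linear independence enters, and I expect it to be the main point of the proof. Let $v:=x-\frac{x^Ty}{\|y\|^2}y$. Then $v^Ty=0$, and $v^Tx=\|x\|^2-(x^Ty)^2/\|y\|^2>0$ by strict Cauchy--Schwarz, which holds because $x,y$ are independent. Now define
\[
S:=\frac{vv^T}{x^Tv}+I-\frac{xx^T}{\|x\|^2}.
\]
Then $Sx=v$. Moreover $z^TSz=0$ forces $z\in\mathrm{sp}\{x\}$ and $v^Tz=0$, hence $z=0$, so $S\succ 0$. Set $B:=S^{-1}$. Then $B^{-1}x=v\perp y$, so at $t_0$ no $X\succeq0$ supported on $\mathcal{N}(W)$ has $y^TXy=t_0>0$.

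This exhausts all feasible $t$, so the feasible problem $\mathrm{SDLCP}(M_A,Q)$ has no complementary solution. The remaining routine checks are the PSD criterion for $B-s\,xx^T$ and the nullity-one claim, both handled by the congruence $B^{-1/2}(\cdot)B^{-1/2}$.
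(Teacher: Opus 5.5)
Your proof is correct, but it is organized differently from the paper's. The paper writes $y=\beta_1u+\beta_2v$ with $u=x/\|x\|$ and $v\perp u$, normalizes to $\beta_1\le 0$, and splits into the cases $\beta_1=0$ and $\beta_1<0$. In each case its $Q$ lives on $\mathrm{sp}\{u,v\}$: it has rank two and vanishes on the orthogonal complement. Complementary solutions are then excluded by direct computations on that plane; in Case~2 this means checking $y\in\mathcal{R}(W)$ both at and above the threshold $\delta=\alpha+\eta^2$.

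You instead first prove a general criterion: for $Q\not\succeq 0$, $\mathrm{SDLCP}(M_{xy^T},Q)$ is solvable iff some $t>0$ with $Q+t\,xx^T\succeq 0$ has $y\not\perp\mathcal{N}(Q+t\,xx^T)$. You then use a single family $Q=B-c\,xx^T$ with $B\succ 0$, for which everything reduces to finding $S\succ 0$ with $y^TSx=0$.

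The underlying mechanism is the same. On $\mathrm{sp}\{u,v\}$, the paper's Case~2 matrix is $B-c\,xx^T$, where $B$ is represented by $\bigl(\begin{smallmatrix}a&-\eta\\-\eta&1\end{smallmatrix}\bigr)$ in the basis $\{u,v\}$, with $a>\eta^2$ and $c=(a+\alpha)/\|x\|^2$. Taking the inverse on the plane, $B^{-1}u\propto u+\eta v$, and this is orthogonal to $y$ exactly because $\eta=-\beta_1/\beta_2$. Case~1 is the diagonal instance of the same picture.

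The difference is that the paper's $B$ vanishes off the plane. So its $W$ is not definite for $n\ge 3$, and the range of $W$ has to be tracked by hand. Your full-rank $B$ makes $W\succ 0$ above $t_0$ and gives a one-dimensional kernel at $t_0$, which removes both the sign normalization and the case split.

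Your route also pinpoints exactly where the hypothesis is used. A matrix $S\succ 0$ with $y^TSx=0$ exists precisely when $x,y$ are independent. It cannot exist for $y=\pm x$, since $x^TSx>0$. This is the phenomenon behind Theorem~\ref{symmetric}, where the kernel vector $Q^{\dagger}u$ of $W_0$ satisfies $u^TQ^{\dagger}u\neq 0$.

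In exchange, the paper's route gives a completely explicit rank-two $Q$, verified with elementary $2\times 2$ arithmetic. It needs neither the rank-one perturbation (Schur complement) lemma nor the inversion of $S$.
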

\begin{proof}
Since $x,y$ are linearly independent, they are nonzero. Set 
\[
u := x/\|x\| \quad \textit{and} \quad v := \frac{(I-uu^T)y}{\|(I-uu^T)y\|}.
\]
Since $y\notin \rm sp\{x\}$ we have $(I-uu^T)y\neq 0$, so that $v$ is well defined. Also, $u^Tv=0$. Set $\beta_1:=u^Ty$ and $\beta_2:= \|(I-uu^T)y\| > 0.$ Then, 
\[
  y = \beta_1 u + \beta_2 v.
\tag{$\ast$}
\] 
For any $X\in\mathbb{S}^n$, $M_A(X) = x(y^TXy)x^T$ and so
$\mathcal{R}(M_A) = \rm sp \{uu^T\}$.
While the matrix $M_A(X)$ remains the same if one replaces $y$ by $-y$, the scalar $\beta_1$ changes to $-\beta_1$. Hence, we may assume without loss of generality, that $\beta_1\leq 0$. We consider the two cases on $\beta_1.$

\medskip
\noindent\textbf{Case~1: $\beta_1=0$.}\\
Then, $y=\beta_2 v$. Fix numbers $\alpha>0, ~\eta>0$ and set
$$Q:=-\alpha\,uu^T+\eta\,vv^T.$$
Since $u^TQu=-\alpha<0$ and $v^TQv=\eta>0$, the matrix $Q$ is indefinite. For $t>0,$ set 
\[
Z_t:=t(u-v)(u-v)^T\succeq 0,
\tag{$\ast\ast$} \]
so that 
$$y^TZ_ty = t{\beta_2}^2((u-v)^Tv)^2= t\beta_2^2.$$
Then, $$M_A(Z_t)=(y^TZ_ty)xx^T=t{\beta_2}^2 xx^T=\lambda t uu^T,$$
where $\lambda:=\beta_2^2\|x\|^2>0.$ Thus, if $t$ is chosen such that $t\geq \frac{\alpha}{\lambda} >0,$ then 
\[
  W_t:=M_A(Z_t)+Q = (\lambda t-\alpha)\,uu^T+\eta\,vv^T\succeq 0.
\]
We have shown that, $Z_t$ is feasible for $\mathrm{SDLCP}(M_A,Q)$. Next, for the chosen $Q$, we show that there is no complementary solution. Observe that 
\begin{eqnarray*}
M_A(X)+Q & = & (y^TXy)xx^T-\alpha uu^T+ \eta vv^T \\ 
& = & ({\|x \|}^2 y^TXy - \alpha)uu^T + \eta vv^T.    
\end{eqnarray*}
Suppose that there is a complementary solution $X$, so that, $X\succeq 0$ and 
$$M_A(X)+Q =(\|x\|^2 y^TXy-\alpha)\,uu^T+\eta\,vv^T=:W \succeq 0,$$ with $\langle X,W\rangle=0.$ Note that, 
$0 \leq u^TWu=\|x\|^2 y^TXy-\alpha.$ Also, 
\[
\langle uu^T,X \rangle = \operatorname{tr}(uu^TX) = \operatorname{tr}(u^TXu) = u^TXu.
\]
Thus, 
\[
0=  \langle X,W\rangle
  = \underbrace{(\|x\|^2 y^TXy-\alpha)}_{\geq\,0}\underbrace{u^TXu}_{\geq\,0}
  +\,\eta\underbrace{v^TXv}_{\geq\,0}.
\]
In particular, $v^TXv=0$ and since $X\succeq 0$, this means that $Xv=0$. Thus, $y^TXy=0$ since $y=\beta_2 v$, implying that the first factor in the first term above (namely $-\alpha$), is negative, a contradiction. 

\medskip
\noindent\textbf{Case~2: $\beta_1<0$.}\\
Observe that, $\beta_1\neq\beta_2.$ Choose $\alpha >0$ and set $\eta:=-\beta_1/\beta_2>0.$ Define $$Q:=-\alpha\,uu^T-\eta(uv^T+vu^T)+vv^T.$$
Again, $Q$ is indefinite, since $u^TQu=-\alpha<0$ and $v^TQv=1>0$. Let $Z_t \succeq 0$ be as given in ($\ast \ast$).  Considering $y$ as in ($\ast$), we have 
\begin{eqnarray*}
  y^TZ_ty & = & 
  t((u-v)^Ty)^2 \\
  & = & t((u^Ty-v^Ty)^2 \\
  & = & t(\beta_1-\beta_2)^2 \\
  & = & t\gamma > 0,  
\end{eqnarray*}
with $\gamma:=(\beta_1-\beta_2)^2.$
If we set $\lambda:=\gamma\|x\|^2>0$, then 
\[
M_A(Z_t) = t \gamma xx^T= t\gamma\|x\|^2 uu^T=\lambda t uu^T,
\]
so that, 
\[
  W_t := M_A(Z_t)+Q = (\lambda t-\alpha)\,uu^T-\eta(uv^T+vu^T)+vv^T.
\]
We claim that $W_t$ is positive semidefinite. Note that, for every $z \in \rm sp\{u,v\}^{\perp}$, $W_tz=0$. Thus, $W_t$ is positive semidefinite if and only if it is positive semidefinite on $\rm sp\{u,v\}$. In order to show the latter, let ${\bf x}_s:=u+sv$. Then, \[{\bf x}_s^TW_t{\bf x}_s = s^2-2\eta s+(\lambda t-\alpha).\]
This is nonnegative for all $s\in\mathbb{R},$ if and only if $t\geq(\alpha+\eta^2)/\lambda$.
Thus $W_t\succeq 0$ for any choice of such a $t$, establishing feasibility.

We show, by contradiction, that for the constructed matrix $Q$, there is no complementary solution. Suppose that there exists $X\succeq 0,~W:=M_A(X)+Q\succeq 0$ with $\langle X,W\rangle=0.$ Then, $XW=0=WX$. 
Setting
$M_A(X)=\delta \,uu^T$ with $\delta:=\|x\|^2(y^TXy)\geq 0$, we have 
$$
    W=(\delta-\alpha)\,uu^T-\eta(uv^T+vu^T)+vv^T.
$$
Observe that ${\cal R}(W)$ is of dimension at most $2$. The restriction of $W$ to the subspace $\operatorname{span}\{u,v\},$ relative to the basis $\{u,v\},$ has the matrix representation $$M_\delta:=\begin{pmatrix}\delta-\alpha&-\eta\\-\eta&1\end{pmatrix}.$$ Since $W\succeq 0,$  we have $0 \leq \det(M_\delta)=\delta-\alpha-\eta^2$, so that $\delta\geq\alpha+\eta^2$. We show that $M_A(X)=0,$ which would imply that  $W=Q\not\succeq 0$, a contradiction. We consider the two cases on $\delta.$

Let $\delta>\alpha+\eta^2$.
Then, $M_\delta$ is invertible and so $y \in \operatorname{span}\{u,v\}={\mathcal{R}(W)}$. Note that, since $XW=0,$ we have $\mathcal{R}(W)\subseteq N(X).$ Thus, $Xy=0$, proving that $M_A(X)=0$. 

Finally, suppose that $\delta=\alpha+\eta^2$. 
Here, $$W = \eta^2 uu^T-\eta(uv^T+vu^T)+vv^T = (\eta u-v)(\eta u-v)^T,$$
so that $\mathcal{R}(W)=\operatorname{span}\{\eta u-v\}$.
Again, since $\mathcal{R}(W)\subseteq N(X),$ we have $X(\eta u-v)=0$, i.e. $Xv=\eta Xu$. This leads to the expressions 
\begin{center}
$u^TXv=\eta(u^TXu)$ and $v^TXv=\eta (v^TXu)=\eta^2(u^TXu)$    
\end{center}
and so 
\[
  y^TXy = (\beta_1 u+\beta_2 v)^TX(\beta_1 u+\beta_2 v)
  = (\beta_1+\eta\beta_2)^2(u^TXu).
\]
Since $\eta=-\beta_1/\beta_2$, we have 
$y^TXy=0$ and so $M_A(X) = x(y^TXy)x^T=0$, again. \\

This completes the proof that $M_A$ does not have the $Q_0$-property.
\end{proof}

\section{$M_A$ has $Q_0$-property, when $A$ is symmetric rank-one.}

Here, we prove that when $A$ is symmetric and is of rank one, $M_A$ has the $Q_0$-property. In other words, we show that the converse of Theorem \ref{thm:notQ0} is true. We make some preliminary considerations towards proving it. The key idea involves a minimization problem.

Assuming that the symmetric matrix $Q$ is not positive semidefinite and $u \neq 0,$ we define the set
\[
  \Gamma_{Q,u} := \bigl\{\lambda \in \mathbb{R}: \lambda\,uu^T+Q\succeq 0\bigr\}.
\]
If $\Gamma_{Q,u} \neq \emptyset,$ then it is a closed ray. It is upward closed: if $\lambda\in\Gamma_{Q,u}$ and $\lambda'>\lambda,$ then $\lambda'uu^T+Q=(\lambda uu^T+Q)+(\lambda'-\lambda)uu^T\succeq 0.$ By item 1 of Theorem \ref{qprop1} below, $\Gamma_{Q,u}$ is bounded below by $0.$ Hence
\[
\Gamma_{Q,u}=[\lambda_0,\infty), \qquad \text{where } \lambda_0:=\min\Gamma_{Q,u}>0.
\]
Now, consider the \textit{semidefinite programming (SDP)} problem:
\[
\begin{aligned}
\text{Minimize} \qquad & \lambda \\
\text{subject to} \qquad & \lambda \in \Gamma_{Q,u}.
\end{aligned}
\tag{1}
\]
We show that this SDP admits an explicit (optimal) solution, in Theorem \ref{mugamma}. This solution is expressed in terms of the Moore-Penrose inverse of $Q$ (and the vector $u$). We need some intermediate properties, that are collected in the next result.

\begin{theorem}\label{qprop1}
Assume that $Q \nsucceq 0,$ $u \neq 0$ and
$\Gamma_{Q,u} \neq \emptyset.$ Then, the following hold:
\begin{enumerate}
\item $\lambda \in \Gamma_{Q,u} \Longrightarrow \lambda >0.$
\item $u \in {\cal R}(Q)$.
\item $Q$ has exactly one negative eigenvalue, and this eigenvalue is simple.
\end{enumerate}
\end{theorem}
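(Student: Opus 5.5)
Fix $\lambda \in \Gamma_{Q,u}$, so that $P := \lambda uu^T + Q \succeq 0$. The three items follow from elementary arguments about rank-one perturbations, and I would prove them in order.

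For item 1, suppose $\lambda \le 0$. Then $Q = P - \lambda uu^T = P + |\lambda|\,uu^T$ is a sum of two positive semidefinite matrices, so $Q \succeq 0$. This contradicts the hypothesis $Q \nsucceq 0$. Hence $\lambda > 0$.

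For item 3, I would use a dimension count. Let $N$ be the span of the eigenvectors of $Q$ with negative eigenvalues. For any nonzero $z \in N \cap u^\perp$,
\[ z^T P z = z^T Q z < 0, \]
which contradicts $P \succeq 0$. So $N \cap u^\perp = \{0\}$. Since $u^\perp$ has dimension $n-1$, this forces $\dim N \le 1$. As $Q \nsucceq 0$, we also have $\dim N \ge 1$. Therefore $Q$ has exactly one negative eigenvalue counted with multiplicity, i.e.\ a simple negative eigenvalue $\mu < 0$ with unit eigenvector $w$. From $N \cap u^\perp = \{0\}$ we also obtain $u^T w \neq 0$, which is needed below.

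For item 2, it suffices to show $u \perp \mathcal{N}(Q)$, because $Q$ is symmetric and so $\mathcal{R}(Q) = \mathcal{N}(Q)^\perp$. Take $z \in \mathcal{N}(Q)$ and form $z + s w$ for $s \in \mathbb{R}$. Since $Qz = 0$, the cross term vanishes and $Q(z+sw) = s\mu w$. Expanding gives
\[ (z+sw)^T P (z+sw) = \lambda\,(u^T z + s\,u^T w)^2 + s^2 \mu . \]
Suppose $u^T z \neq 0$. Choose $s = -u^T z/u^T w$, which is possible because $u^T w \neq 0$. The first term then vanishes and the expression equals $s^2 \mu < 0$, since $s \neq 0$. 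This contradicts $P \succeq 0$, so $u^T z = 0$ and $u \in \mathcal{R}(Q)$.

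Item 2 is the main obstacle. It needs the eigenvector $w$ from item 3, so I would prove item 3 first and record $u^T w \neq 0$ there. The key step in item 2 is choosing $s$ to cancel the $\lambda$-term exactly.
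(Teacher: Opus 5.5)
Your proof is correct, and it reaches the three items by a different and more economical route than the paper. The paper works throughout with $2\times 2$ Gram matrices of $W=Q+\lambda uu^T$:
\begin{itemize}
\item For item 1 it tests $W$ on a unit eigenvector $v$ with $Qv=-\mu v$ (paper's convention $\mu>0$). This gives $u^Tv\neq 0$ and $\lambda\ge \mu/(u^Tv)^2>0$.
\item For item 2 it takes the nonzero $\mathcal{N}(Q)$-component $z$ of $u$. The Gram determinant on $\{v,z\}$ equals $-\mu\lambda\|z\|^4$, which contradicts item 1.
\item For item 3 it assumes two orthonormal negative eigenvectors. It then plays the upper bound on $\lambda$ from the determinant against the lower bound $\lambda\ge\mu_1/\epsilon_1^2$.
\end{itemize}

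You replace item 1 by the one-line cone argument $Q=P+|\lambda|uu^T\succeq 0$. You replace item 3 by the dimension count $N\cap u^\perp=\{0\}$. That count makes clear what the paper's determinant computation is detecting: $\epsilon_2v-\epsilon_1w$ is a nonzero vector in $\operatorname{sp}\{v,w\}\cap u^\perp$ on which $Q$ is negative. It is the standard inertia fact that a positive semidefinite rank-one update removes at most one negative eigenvalue.

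For item 2 you test $P$ on the same kind of plane as the paper, spanned by a negative eigenvector and a null vector. By choosing $s$ to cancel the $\lambda$-term, you exhibit a vector with $(z+sw)^TP(z+sw)=s^2\mu<0$ (your convention $\mu<0$) directly. So your item 2 does not need $\lambda>0$.

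Your version is shorter and explains why the statement holds. The paper's more computational version yields the explicit bound $\lambda\ge\mu/(u^Tv)^2$ along the way, which is exactly $\lambda_0$ in Case 1 of Theorem~\ref{mugamma}. The fact $u^Tv\neq 0$, which the paper reuses there, is also supplied by your item 3.

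One small remark: item 2 does not really depend on item 3. For any unit eigenvector $w$ with $Qw=\mu w$, $\mu<0$, the inequality $0\le w^TPw=\mu+\lambda(u^Tw)^2$ already forces $u^Tw\neq 0$.
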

\begin{proof}
1. Since $Q \nsucceq 0$, it has a negative eigenvalue. Let $v$ be such that $Qv = -\mu v,~\|v \|=1$ and $\mu > 0$. Let $\lambda\in\Gamma_{Q,u}$ so that $W=Q+\lambda uu^T\succeq0$. Then,
$$0 \leq v^TWv=v^TQv+\lambda {(u^Tv)}^2=-\mu +\lambda {(u^Tv)}^2.$$
Note that, if $u^Tv=0,$ then $\mu \leq 0,$ a contradiction. Thus, ${(u^Tv)}^2 >0$ so that $\lambda\geq\mu/{(u^Tv)}^2>0$.

2. Let $v$ be as above, so that $v \in {\cal R}(Q)={\cal N}(Q)^{\perp}$. Suppose that $u \notin {\cal R}(Q)$. Consider the decomposition: 
\begin{center}
$u = y + z~$ with $~y \in {\cal R}(Q)~$ and $~0 \neq z \in {\cal N}(Q).$    
\end{center}
Then $Qz=0$ and $y^Tz=0,$ so that $u^Tz = \|z\|^2 > 0.$ Also, $v^Tz=0.$ Next, let $\lambda \in \Gamma_{Q,u}$ so that $W: = Q+\lambda uu^T \succeq 0$. We have:
\[
v^TWv = -\mu+\lambda(u^Tv)^2,\qquad
z^TWz = \lambda\|z\|^4,\qquad
v^TWz = \lambda(u^Tv)\|z\|^2.
\]
Since $W\succeq 0$, the Gram matrix of the form $(a,b)\mapsto a^TWb$ on the vectors $v,z$, namely
\[
\begin{pmatrix} -\mu+\lambda {(u^Tv)}^2 & \lambda(u^Tv)\|z\|^2\\ \lambda(u^Tv)\|z\|^2 & \lambda\|z\|^4
\end{pmatrix},
\]
is positive semidefinite. Hence its determinant is nonnegative:
\[
0 \leq \big(-\mu+\lambda(u^Tv)^2\big)\lambda\|z\|^4 - \lambda^2(u^Tv)^2\|z\|^4
     = -\mu\lambda\|z\|^4.
\]
This means that $\lambda \leq 0$, a contradiction to the first item. Hence $u\in{\cal R}(Q)$.

3. As noted above, $Q$ has a negative eigenvalue. Suppose that $Q$ has two negative eigenvalues, counted with multiplicity, so that there exist two orthonormal
eigenvectors $v,w$ with $Qv=-\mu_1 v,~Qw=-\mu_2 w,$ where $\mu_1, \mu_2>0$ are not necessarily distinct.

Again, let $\lambda\in\Gamma_{Q,u}$ so that $W:=Q+\lambda uu^T\succeq0$. Set $\epsilon_1:=u^Tv$ and $\epsilon_2:=u^Tw$. As obtained earlier, the Gram matrix of the form $(a,b)\mapsto a^TWb$ on the vectors $v,w$ is
\[
\begin{pmatrix} -\mu_1+\lambda {\epsilon}_1^2 & \lambda {\epsilon}_1{\epsilon}_2\\ \lambda{\epsilon}_1{\epsilon}_2 & -\mu_2+\lambda {\epsilon_2}^2\end{pmatrix}.
\]
This matrix is positive semidefinite. So, $\epsilon_1, \epsilon_2 \neq0, ~\lambda \geq \mu_1/\epsilon_1^2$ and
$\lambda\geq\mu_2/\epsilon_2^2$. Since its determinant is nonnegative, we have
\[
0 \leq \mu_1\mu_2 - \lambda(\mu_1\epsilon_2^2+\mu_2\epsilon_1^2),\]
yielding
\[
\lambda \leq \frac{\mu_1\mu_2}{\mu_1\epsilon_2^2+\mu_2\epsilon_1^2}.
\]
However, note that
\[
\frac{\mu_1}{\epsilon_1^2} -\lambda \geq \frac{\mu_1}{\epsilon_1^2} - \frac{\mu_1\mu_2}{\mu_1\epsilon_2^2+\mu_2\epsilon_1^2}
= \frac{\mu_1^2\epsilon_2^2}{\epsilon_1^2(\mu_1\epsilon_2^2+\mu_2\epsilon_1^2)} > 0,
\]
a contradiction to $\lambda\geq\mu_1/\epsilon_1^2$. Hence $Q$ has at most one negative eigenvalue, counted with multiplicity, and so exactly one. In particular, this eigenvalue is simple.
\end{proof}

\begin{theorem}\label{mugamma}
Under the hypotheses of Theorem \ref{qprop1}, we have
\begin{center}
$u^TQ^{\dag}u <0$ and $\lambda_0=\min\Gamma_{Q,u}=-\dfrac{1}{u^TQ^{\dag}u}$.
\end{center}
Moreover, $x^0:=Q^{\dag}u\neq 0$ and $(Q+\lambda_0uu^T)x^0=0.$ In particular, $Q+\lambda_0uu^T$ is singular.
\end{theorem}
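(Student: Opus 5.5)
We use the spectral decomposition of $Q$ together with Theorem \ref{qprop1}. By item 3 of that theorem, write $Q = -\mu vv^T + \sum_{i=2}^{r}\mu_i w_iw_i^T$, where $\|v\|=1$, $\mu>0$, and $\mu_i>0$ with orthonormal $w_i$. The remaining eigenvectors span ${\cal N}(Q)$. Then
\[
Q^{\dag} = -\tfrac{1}{\mu}vv^T + \sum_{i\ge 2}\tfrac{1}{\mu_i}w_iw_i^T .
\]
By item 2, $u\in{\cal R}(Q)$, so $u = \epsilon v + \sum_{i\ge2}\epsilon_i w_i$ with $\epsilon = u^Tv$. The proof of item 1 shows $\epsilon\neq 0$. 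Two facts follow directly: $x^0 = Q^{\dag}u \neq 0$, since $Q^{\dag}$ is injective on ${\cal R}(Q)$ and $u\neq 0$; and $QQ^{\dag}u = u$, since $QQ^{\dag}$ is the orthogonal projector onto ${\cal R}(Q)$.

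\textbf{Step 1: the min--max characterization.} Fix $\lambda>0$ and put $W_\lambda = Q+\lambda uu^T$. Since $u\in{\cal R}(Q)$, $W_\lambda$ vanishes on ${\cal N}(Q)$ and maps ${\cal R}(Q)$ into itself. So $W_\lambda\succeq 0$ if and only if $z^TQz + \lambda (u^Tz)^2 \ge 0$ for all $z\in{\cal R}(Q)$.

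\textbf{Step 2: reduce to a quadratic inequality via $Q^{\dag}$.} Let $\kappa := u^TQ^{\dag}u$. For $z\in{\cal R}(Q)$, set $s=Q^{1/2}$-type coordinates; more concretely, substitute $z = Q^{\dag}w$ with $w\in{\cal R}(Q)$. This gives
\[
z^TQz = w^TQ^{\dag}w \quad\text{and}\quad u^Tz = u^TQ^{\dag}w .
\]
Define the bilinear form $\langle a,b\rangle_\dag := a^TQ^{\dag}b$ on ${\cal R}(Q)$. It is nondegenerate there, with signature one negative and the rest positive. The condition becomes
\[
\langle w,w\rangle_\dag + \lambda\langle u,w\rangle_\dag^2 \ge 0 \quad \text{for all } w\in{\cal R}(Q).
\]

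\textbf{Step 3: split $w$ along $u$ (the main obstacle).} The key point is to decompose $w = cu + w'$ with $\langle u,w'\rangle_\dag = 0$; this requires $\kappa\neq 0$. To obtain $\kappa<0$, take $w=u$ in Step 2. This gives $\kappa+\lambda\kappa^2 \ge 0$ for $\lambda\in\Gamma_{Q,u}$, which alone does not fix the sign. Instead, argue by signature. If $\kappa\ge 0$, then $\{u\}^{\perp_\dag}$ would still contain a negative vector (after handling $\kappa=0$ by a small perturbation, or directly), so some $w$ with $\langle u,w\rangle_\dag=0$ and $\langle w,w\rangle_\dag<0$ exists. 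The inequality in Step 2 then fails for every $\lambda$, contradicting $\Gamma_{Q,u}\neq\emptyset$.

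Concretely, I would pick $w$ to be the $\dag$-orthogonal projection of $v$ onto $\{u\}^{\perp_\dag}$ and compute its $\dag$-norm explicitly in the coordinates $\epsilon,\epsilon_i$. This reduces to the inequality
\[
\kappa = -\tfrac{\epsilon^2}{\mu} + \sum_{i\ge2} \tfrac{\epsilon_i^2}{\mu_i}
\]
together with the Cauchy–Schwarz-type estimate from the proof of item 3.

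\textbf{Step 4: read off $\lambda_0$.} Once $\kappa<0$ is known, ${\cal R}(Q)$ splits as ${\rm sp}\{u\}$ plus its $\dag$-orthogonal complement. On that complement, $\langle\cdot,\cdot\rangle_\dag$ is positive definite, because the single negative direction has been used up by $u$. For $w=cu+w'$ the condition becomes $c^2(\kappa+\lambda\kappa^2)+\langle w',w'\rangle_\dag \ge 0$. This holds for all $w$ if and only if $1+\lambda\kappa\le 0$, that is, $\lambda\ge -1/\kappa$. Hence $\lambda_0 = -1/(u^TQ^{\dag}u)$.

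\textbf{Step 5: the kernel vector.} Finally, compute
\[
(Q+\lambda_0uu^T)x^0 = QQ^{\dag}u + \lambda_0 u\,(u^TQ^{\dag}u) = u - u = 0 ,
\]
which shows $Q+\lambda_0uu^T$ is singular.
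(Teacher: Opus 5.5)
\textbf{The gap is in Step 3, in the subcase $\kappa=0$.} There you claim that $\{u\}^{\perp_\dagger}$ still contains some $w$ with $\langle w,w\rangle_\dagger<0$. The signature argument gives the opposite.

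Suppose $\kappa=0$ and such a $w$ existed. Then $u,w$ are linearly independent, and $\langle au+bw,au+bw\rangle_\dagger=b^2\langle w,w\rangle_\dagger\le 0$. So $\mathrm{sp}\{u,w\}$ would be a two-dimensional subspace of $\mathcal{R}(Q)$ on which $\langle\cdot,\cdot\rangle_\dagger$ is negative semidefinite. It would therefore meet the $(r-1)$-dimensional positive definite subspace $\mathrm{sp}\{w_2,\dots,w_r\}$ nontrivially, which is impossible. For example, $Q=\mathrm{diag}(-1,1)$ and $u=(1,1)^T$ give $\kappa=0$ and $\{u\}^{\perp_\dagger}=\mathrm{sp}\{u\}$.

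Your concrete recipe also fails here, because projecting $v$ onto $\{u\}^{\perp_\dagger}$ divides by $\kappa$. So when $\kappa=0$ no single $w$ defeats every $\lambda$; the violating vector must depend on $\lambda$. The ``small perturbation'' remark does not supply this.

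\textbf{How to repair it.} Either of the following works.
\begin{itemize}
\item Take $y\in\mathcal{R}(Q)$ with $\langle u,y\rangle_\dagger=1$; a multiple of $v$ works, since $\langle u,v\rangle_\dagger=-\epsilon/\mu\neq 0$. Set $w=u+sy$. Then $\langle w,w\rangle_\dagger+\lambda\langle u,w\rangle_\dagger^2=2s+s^2(\langle y,y\rangle_\dagger+\lambda)$, which is negative for small $s<0$.
\item Use the matrix determinant lemma on $\mathcal{R}(Q)$: $\det\bigl((Q+\lambda uu^T)|_{\mathcal{R}(Q)}\bigr)=\det\bigl(Q|_{\mathcal{R}(Q)}\bigr)(1+\lambda\kappa)$, with $\det\bigl(Q|_{\mathcal{R}(Q)}\bigr)<0$. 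Then $W_\lambda\succeq 0$ forces $1+\lambda\kappa\le 0$, hence $\kappa<0$, with no subcases. This is essentially what the paper gets in its Case 2 from the $2\times 2$ Gram determinant on $\{v,x^0\}$, namely $-(1+\lambda\mu^*)(\mu\mu^*+\eta^2)\ge 0$ with $\mu\mu^*+\eta^2>0$.
\end{itemize}
For $\kappa>0$ your argument is fine, and no Cauchy--Schwarz estimate is needed. The projected vector directly satisfies $\langle w,w\rangle_\dagger=-\frac{1}{\mu}-\frac{\epsilon^2}{\mu^2\kappa}<0$.

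\textbf{Comparison with the paper.} With this repair, the rest of your proof is correct and follows a genuinely different route. You move the condition to the Lorentzian form $\langle\cdot,\cdot\rangle_\dagger$ on $\mathcal{R}(Q)$ and split $\mathcal{R}(Q)=\mathrm{sp}\{u\}\oplus\{u\}^{\perp_\dagger}$, with Sylvester's law of inertia making the complement positive definite. This gives necessity and sufficiency of $\lambda\ge -1/\kappa$ at once, and it removes the paper's case split on whether $u\in\mathrm{sp}\{v\}$. The paper instead proves the lower bound by the Gram determinant. It then verifies feasibility of $-1/\mu^*$ separately, using $y^TQy\ge 0$ for $y\perp u$ and the decomposition $x=y+\alpha x^0$. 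That route is longer and more computational, but it uses nothing beyond $2\times 2$ determinants.
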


\begin{proof}
Using (3) of Theorem \ref{qprop1}, we may write the eigen-decomposition of $Q$ as
\begin{center}
$Q=-\mu\,vv^T + \sum_j \gamma_j {\bf w}_j{\bf w}_j^T$, with $\mu>0$ and $\gamma_j>0$.
\end{center}
Here, $v$ is such that $Qv = -\mu v,~\|v \|=1$ and $\mu > 0$, and the sum runs over the positive eigenvalues $\gamma_j$ of $Q$ (the zero eigenvalues do not appear). The vectors $v, {\bf w}_j$ for all $j$ as above, are orthonormal. Then, it follows that
\begin{center}
$Q^{\dag}=-\frac{1}{\mu}\,vv^T + \sum_j \frac{1}{\gamma_j} {\bf w}_j{\bf w}_j^T.$
\end{center}
By (2) of Theorem \ref{qprop1}, $u \in {\cal R}(Q)$. So, we may write
\begin{center}
$u=\eta v+\sum_j\eta_j{\bf w}_j,$ with $\eta=u^Tv$ and $\eta_j=u^T{\bf w}_j$.
\end{center}
Note that $\eta\neq 0$, by the proof of (1) of Theorem \ref{qprop1}.

Set $x^0:=Q^{\dag}u$ and $\mu^*:=u^TQ^{\dag}u=u^Tx^0.$ Since $u\in{\cal R}(Q),$ we have $Qx^0=QQ^{\dag}u=u.$ As $u\neq 0,$ it follows that $x^0\neq 0.$ Also,
\begin{center}
$x^0= -\frac{\eta}{\mu}v+\sum_j\frac{\eta_j}{\gamma_j}{\bf w}_j~$ and $~\mu^*= -\frac{\eta^2}{\mu}+\sum_j\frac{\eta_j^2}{\gamma_j}.    $
\end{center}
Further, for any $x$,
\begin{center}
$(x^0)^TQx = (Qx^0)^Tx=u^Tx~$ and  $~(x^0)^TQx^0=u^Tx^0=\mu^*.$    
\end{center}
We now split the proof into two cases.

\medskip
\noindent\textbf{Case 1: $u\in \rm sp\{v\}$.}

In this case, all $\eta_j=0$ and $u=\eta v$, with $\eta\neq 0$ given as above. Then
\[
x^0 = -\frac{\eta}{\mu}v \quad
\text{and} \quad \mu^* = -\frac{\eta^2}{\mu} < 0,
\]
so that
\[
-\frac{1}{\mu^*} = \frac{\mu}{\eta^2}.
\]

Let $\lambda\in\Gamma_{Q,u}$, so that $W:=Q+\lambda uu^T\succeq 0$. Since $u=\eta v$,
\[
Wv = Qv + \lambda(u^Tv)u
= -\mu v + \lambda\eta^2 v
= (-\mu+\lambda\eta^2)v.
\]
As $W\succeq0$, we must have
\[
-\mu+\lambda\eta^2\ge 0
\quad \textit{so that}\quad
\lambda\ge \frac{\mu}{\eta^2} = -\frac{1}{\mu^*}.
\]
Thus, $\Gamma_{Q,u}\subseteq \bigl[-\frac{1}{\mu^*},\infty\bigr)$ and hence
\[
\lambda_0 \ge -\frac{1}{\mu^*}.
\]
Set $\lambda^* := -\frac{1}{\mu^*} = \frac{\mu}{\eta^2}$. Since $uu^T=\eta^2vv^T,$ the eigen-decomposition given earlier yields
\begin{align*}
Q+\lambda^*uu^T & = (-\mu\,+\lambda^*{\eta}^2)vv^T + \sum_j \gamma_j {\bf w}_j{\bf w}_j^T\\
& = \sum_j \gamma_j {\bf w}_j{\bf w}_j^T.
\end{align*}
Since $\gamma_j>0,$ for all $j$ above, $Q+\lambda^*uu^T\succeq 0$, so that $\lambda^*\in\Gamma_{Q,u}$. Hence,
\[
\lambda_0\le \lambda^* = -\frac{1}{\mu^*}.
\]
We have shown that $\lambda_0 = -\frac{1}{\mu^*},$ completing Case 1. Observe that
\[
(Q+\lambda^*uu^T)v = Qv+\lambda^* \eta u = -\mu v+\lambda^*{\eta}^2v = 0,
\]
and $x^0$ is a multiple of $v.$ Thus, $(Q+\lambda_0uu^T)x^0=0$; that is, the positive semidefinite matrix $Q+\lambda uu^T$ is singular at the optimum value $\lambda = \lambda^*=\lambda_0.$

\medskip
\noindent\textbf{Case 2: $u\notin \rm sp\{v\}$.}

In this case, at least one $\eta_j\neq 0$. Hence $x^0\notin \rm sp\{v\},$ and since $x^0\neq 0,$ the vectors $v, x^0$ are linearly independent. Again, let $\lambda\in\Gamma_{Q,u}$, so that $W:=Q+\lambda uu^T\succeq 0$. We compute:
\[
v^TWv = v^TQv+\lambda {(u^Tv)}^2=-\mu +\lambda \eta^2,
\]
\[
v^TWx^0 = v^TQx^0+\lambda (v^Tu)(u^Tx^0) = \eta+\lambda \eta\mu^* = \eta(1+\lambda\mu^*),
\]
\[
(x^0)^TWx^0=(x^0)^TQx^0+\lambda {((x^0)^Tu)}^2=\mu^* + \lambda {(\mu^*)}^2=\mu^*(1+\lambda \mu^*).
\]
Thus, the Gram matrix of the form $(a,b)\mapsto a^TWb$ on the vectors $v,x^0$ is
\[
\begin{pmatrix}
-\mu + \lambda\eta^2 & \eta(1+\lambda\mu^*)\\[2pt]
\eta(1+\lambda\mu^*) & \mu^*(1+\lambda\mu^*)
\end{pmatrix}.
\]
As $W\succeq 0$, this matrix is positive semidefinite, so that its determinant is nonnegative:
\begin{align*}
0
&\le (-\mu+\lambda \eta^2)\mu^*(1+\lambda\mu^*) - \eta^2(1+\lambda\mu^*)^2 \\
&= -(1+\lambda\mu^*)(\mu\mu^*+\eta^2). \tag{$\ast$}
\end{align*}
Now
\[
\mu\mu^*+\eta^2 = \mu\sum_j\frac{\eta_j^2}{\gamma_j} =: \delta \ge 0.
\]
If $\delta=0$, then all $\eta_j=0$, so that $u=\eta v,$ a contradiction to $u\notin \rm sp \{v\}$. Thus, $\delta>0$. Then ($\ast$) forces
\[
1+\lambda\mu^*\le 0.
\]
Since $\lambda>0$, this implies that
\[
\mu^*<0
\quad\text{and}\quad
\lambda\ge -\frac{1}{\mu^*}.
\]
Thus, once again, $\Gamma_{Q,u}\subseteq [-1/\mu^*,\infty)$ and $\lambda_0\ge -1/\mu^*$.

To conclude the proof in this case, we must show that $\lambda^* := -1/\mu^*$ is feasible. Observe that, for any $\lambda\in\Gamma_{Q,u}$ and $y\perp u,$ we have
\[
0\le y^T(Q+\lambda uu^T)y = y^TQy.
\]
Let $x\in\mathbb{R}^n$ be arbitrary, and set
\[
\alpha := \frac{u^Tx}{\mu^*},\qquad y := x - \alpha x^0.
\]
Then
\[
u^Ty = u^Tx - \alpha\,u^Tx^0
= u^Tx - \alpha\mu^* = 0,
\]
so that
\begin{align*}
0 \le y^TQy
&= x^TQx - 2\alpha (x^0)^TQx + \alpha^2 (x^0)^TQx^0 \\
&= x^TQx - 2\alpha (u^Tx) + \alpha^2 \mu^* \\
&= x^TQx - \frac{(u^Tx)^2}{\mu^*}.
\end{align*}
Recalling that $\lambda^*=-1/\mu^*$, we then have, for every $x \in \mathbb{R}^n$,
\[
x^T(Q+\lambda^*uu^T)x = x^TQx - \frac{(u^Tx)^2}{\mu^*} \ge 0.
\]
Thus $Q+\lambda^*uu^T\succeq 0$, i.e., $\lambda^*\in\Gamma_{Q,u}$. We have shown that
\[
\lambda_0\le \lambda^* = -\frac{1}{\mu^*},
\]
and so $\lambda_0=-1/\mu^*.$ Finally,
\[
(Q+\lambda_0uu^T)x^0 = Qx^0+\lambda_0(u^Tx^0)u = u+\lambda_0\mu^*u = (1+\lambda_0\mu^*)\,u = 0,
\]
with $x^0\neq 0.$ Hence the matrix $Q+\lambda uu^T$ is singular at $\lambda = \lambda_0,$ in this case, too.
\end{proof}

We are now in a position to prove the second main result.

\begin{theorem}
\label{symmetric}
Let $A$ be a real symmetric matrix of rank one, and let $M_A(X)=AXA^T$. Then $M_A$ has the $Q_0$-property.
\end{theorem}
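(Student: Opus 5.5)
Since $A$ is real symmetric of rank one, write $A=\sigma uu^T$ with $\sigma\neq 0$ and $u\neq 0$. Then $M_A(X)=\sigma^2(u^TXu)\,uu^T$. Absorbing $\sigma^2$ into $u$, we may take $A=uu^T$, so that $M_A(X)=(u^TXu)\,uu^T$ and ${\cal R}(M_A)=\operatorname{sp}\{uu^T\}$.

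Let $Q\in\mathbb{S}^n$ be such that $\mathrm{SDLCP}(M_A,Q)$ is feasible. If $Q\succeq 0$, then $X=0$ solves the problem. So assume $Q\nsucceq 0$ and let $Z\succeq 0$ be feasible. Then $\lambda:=u^TZu$ satisfies $\lambda uu^T+Q\succeq 0$, so $\Gamma_{Q,u}\neq\emptyset$. All the hypotheses of Theorem \ref{qprop1} therefore hold, and Theorem \ref{mugamma} applies. It gives $\mu^*:=u^TQ^{\dag}u<0$, $\lambda_0=-1/\mu^*$, and $x^0:=Q^{\dag}u\neq 0$ with $(Q+\lambda_0uu^T)x^0=0$.

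The natural candidate is a rank-one solution $X:=c\,x^0(x^0)^T$ with $c\geq 0$, chosen so that $u^TXu=\lambda_0$. Since $u^Tx^0=\mu^*$, we have $u^TXu=c(\mu^*)^2$. Setting $c:=\lambda_0/(\mu^*)^2=-1/(\mu^*)^3>0$ gives the explicit formula
$$X=-\frac{1}{(u^TQ^{\dag}u)^3}\,Q^{\dag}uu^TQ^{\dag}.$$
We then verify the three conditions:
\begin{itemize}
\item[(a)] $X\succeq 0$, since $c>0$.
\item[(b)] $W:=M_A(X)+Q=\lambda_0uu^T+Q\succeq 0$, since $\lambda_0\in\Gamma_{Q,u}$.
\item[(c)] $WX=c\,(Wx^0)(x^0)^T=0$ by Theorem \ref{mugamma}, hence $\langle X,W\rangle=0$.
\end{itemize}
So $X$ is a complementary solution, and $M_A$ has the $Q_0$-property.

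The genuine difficulty, namely showing that the minimal shift $\lambda_0$ is attained with an explicit kernel vector $x^0$, has already been handled in Theorem \ref{mugamma}. The only remaining points to check are the normalization of $A$ (in particular, that a negative $\sigma$ is harmless because $\sigma$ enters only through $\sigma^2$) and the sign of $c$, which relies on $\mu^*<0$.
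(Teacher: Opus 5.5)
Your proposal is correct and follows the paper's proof essentially step for step. It uses the same reduction to $A=uu^T$, the same appeal to Theorems \ref{qprop1} and \ref{mugamma}, and the same rank-one solution: your constant $c=-1/(\mu^*)^3$ equals the paper's $\lambda_0^3$ because $\lambda_0\mu^*=-1$, and you handle the case $Q\succeq 0$ explicitly, where the paper relies on its preliminary remark.
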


\begin{proof}
Since $A$ is symmetric of rank one, $A=\pm uu^T$ for some nonzero vector $u\in\mathbb{R}^n$, the sign being that of the unique nonzero eigenvalue of $A.$ In either case,
\[
M_A(X)=uu^TXuu^T=(u^TXu)\,uu^T,
\]
so that $M_A=M_{uu^T}.$ Hence we may, and do, assume that $A=uu^T.$

We claim that, for any $Q\in\mathbb{S}^n,$ the feasibility of $\mathrm{SDLCP}(M_A,Q)$ implies its solvability. Let $Q\in\mathbb{S}^n$ be such that $Q\nsucceq 0$ and $\mathrm{SDLCP}(M_A,Q)$ is feasible. Let $X\succeq0$ with $M_A(X)+Q\succeq0$. Setting $\gamma:=u^TXu\geq0$, this means that $\gamma\in\Gamma_{Q,u}$, proving that $\Gamma_{Q,u}\neq\emptyset.$
Let $x^0, \lambda_0$ and $\mu^*$ be as in Theorem \ref{mugamma}, so that $u^Tx^0=\mu^*\neq0$ and $\lambda_0\mu^*=-1$. Define
$$W_0:=Q+\lambda_0uu^T.$$
As $\lambda_0\in\Gamma_{Q,u}$, $W_0\succeq 0$  and
\[
W_0x^0 = Qx^0+\lambda_0 (u^Tx^0)u = u+\lambda_0\mu^*\,u = u\big(1+\lambda_0\mu^*\big)=0.
\]
Define
\[
X_0 := \lambda_0^3\,x^0(x^0)^T.
\]
Since $\lambda_0>0$, $X_0\succeq0$. Also, since  $(\mu^*)^2=\lambda_0^{-2},$
\[
u^TX_0u = \lambda_0^3(u^Tx^0)^2=\lambda_0^3(\mu^*)^2=\lambda_0,
\]
so that $M_A(X_0)=(u^TX_0u)\,uu^T=\lambda_0uu^T.$ This yields $M_A(X_0)+Q = W_0\succeq0$, showing that $X_0$ is feasible. Finally,
\[
W_0X_0 = \lambda_0^3\,(W_0x^0){x^0}^T = 0,
\]
so that $\langle X_0, M_A(X_0)+Q\rangle=\operatorname{tr}(X_0W_0)=\operatorname{tr}(W_0X_0)=0.$
Hence, $X_0$ solves $\mathrm{SDLCP}(M_A,Q)$.
\end{proof}
Next, we present an illustrative example.

\begin{example}
Let $Q = \begin{pmatrix} 1 & 2 & 1 \\ 2 & 1 & 1 \\ 1 & 1 & \frac{2}{3} \end{pmatrix}, 
u = \begin{pmatrix} 1 \\ 5 \\ 2 \end{pmatrix}$ and $x = \begin{pmatrix} 4 \\ 0 \\ -3 \end{pmatrix},$ so that $Qx=u.$ To compute $u^TQ^{\dag}u$ one does not need the Moore-Penrose inverse $Q^{\dag}.$ It suffices to use a particular solution of the equation $Qx=u,$ since $u \in {\cal R}(Q)$ and $Q$ is symmetric. In this case, we then have $u^TQ^{\dag}u=u^Tx,$ where $x$ is given as above. Thus, $u^TQ^{\dag}u=-2$ so that $\lambda_0=1/2.$ Observe that 
$$Q + \lambda u u^T = \begin{pmatrix} 1 + \lambda & 2 + 5\lambda & 1 + 2\lambda \\ 2 + 5\lambda & 1 + 25\lambda & 1 + 10\lambda \\ 1 + 2\lambda & 1 + 10\lambda & \frac{2}{3} + 4\lambda \end{pmatrix}.$$ 
The leading principal submatrix of order $2 \times 2$ has the determinant value $6\lambda -3.$ In order for the given matrix to be positive semidefinite, we must have $\lambda \geq \frac{1}{2},$ confirming the choice above. One may verify that 
\[
Q + \frac{1}{2} u u^T 
=
\begin{pmatrix}
\frac{3}{2} & \frac{9}{2} & 2 \\[4pt]
\frac{9}{2} & \frac{27}{2} & 6 \\[4pt]
2 & 6 & \frac{8}{3}
\end{pmatrix},
\]
is positive semidefinite.     
\end{example}

In the next example, we show that the conclusion of Theorem \ref{symmetric} does not extend to the case, when $A$ is (symmetric,) singular and $\rm rank(A) \geq 2.$

\begin{example}\label{counter1}
For $
A = \begin{pmatrix} 1 & ~~0 & ~~0 \\ 0 & -1 & ~~0 \\ 0 & ~~0 & ~~0 \end{pmatrix}, Q = \begin{pmatrix} -1 & ~~2 & ~~0 \\ ~~2 & -1 & ~~0 \\ ~~0 & ~~0 & ~~1 \end{pmatrix}$ and for $X=(x_{ij}) \in \mathbb{S}^3$, we have 
\[
M_A(X) = \begin{pmatrix} ~~x_{11} & -x_{12} & ~~0 \\ -x_{12} & ~~x_{22} & ~~0 \\ ~~0 & ~~0 & ~~0 \end{pmatrix}
\]
and so \[
W:=M_A(X) + Q = \begin{pmatrix} ~~x_{11}-1 & -x_{12}+2 & ~~0 \\ -x_{12}+2 & ~~x_{22}-1 & ~~0 \\ 0 & ~~0 & ~~1 \end{pmatrix}.
\]
Let  
\[
X_0 = \begin{pmatrix} 5 & 2 & 0 \\ 2 & 5 & 0 \\ 0 & 0 & 0 \end{pmatrix},
\]
so that 
\[
W_0: = M_A(X_0)+Q = \begin{pmatrix} 4 & 0 & 0 \\ 0 & 4 & 0 \\ 0 & 0 & 1 \end{pmatrix} \succeq 0,
\]
showing that $\mathrm{SDLCP}(M_A,Q)$ is feasible, for the chosen $Q$. Next, let $X$ solve $\mathrm{SDLCP}(M_A,Q)$. Then, $XW=0$ and, since the third column of $W$ equals $(0,0,1)^T$, we have $x_{13}=x_{23}=x_{33}=0$. We may then rewrite 
\[
X = \begin{pmatrix} X_1 & 0 \\ 0 & 0 \end{pmatrix}, \quad \textit{with} \quad X_1 = \begin{pmatrix} x & y \\ y & z \end{pmatrix} \succeq 0.
\]
The condition $XW=0$ is equivalent to $X_1 W_1 = 0$, where
\[
W_1 = \begin{pmatrix} x-1 & 2-y \\ 2-y & z-1 \end{pmatrix} \succeq 0.
\]
Note that we must have $x, z \geq 1,$ so that $x+z \geq 2.$ Then, $X_1 W_1 = 0$ expands into  
$$x(x-1) + y(2-y) = 0; x(2-y) + y(z-1) = 0;
z(z-1) + y(2-y) = 0. $$ 
The first and the third yield 
$
z^2-z-x^2+x = 0$; equivalently, $(z-x)(z+x-1)=0.$ The second factor is positive and so $x=z.$ Set $t:=x=z.$  
The second equation simplifies to $t(2-y)+y(t-1) = 2t-y=0$, so that $y=2t$. Substituting into the first equation:
\[
0=t^2-t+2t(2-2t) = t^2 - t + 4t - 4t^2 = -3t^2+3t = -3t(t-1),
\]
so that $t=0$ or $t=1$. When $t=0$, we have $W_1=\begin{pmatrix}-1&2\\2&-1\end{pmatrix}$, and when $t=1, y=2$, $X_1=\begin{pmatrix}1&2\\2&1\end{pmatrix}$. Neither of these matrices is positive semidefinite. 
\end{example}

\section{Acknowledgements}
The author thanks Prof. M.S. Gowda (UMBC, USA) and Dr. I. Jeyaraman (NIT, Trichy) for their comments and suggestions. These resulted in a much clearer presentation of the results.

\end{document}